\numberwithin{equation}{section}
\newtheorem{thm}{Theorem}[section]
\newtheorem{lem}[thm]{Lemma}
\newtheorem{pro}[thm]{Proposition}
\newtheorem{dfn}[thm]{Definition}
\theoremstyle{remark}
\newtheorem{rem}[thm]{Remark}
\title{Whitney stratifications and the continuity of local Lipschitz Killing curvatures}
\author{Nhan Nguyen and Guillaume Valette}
\address{Nhan Nguyen, Centre de Math\'ematiques et Informatique, Aix-Marseille Universit\'e, 39 rue Joliot-Curie, 13453 Marseille Cedex 13, France\
}
\email{nguyenxuanvietnhan@gmail.com}
\address{Guillaume Valette, Instytut Matemaczny PAN, Division in Krakow, ul. Sw. Tomasza, 30, 31-027 Krakow, Poland
}
\email{gvalette@impan.pl}
\keywords{O-minimal structures, Definable sets, Polynomially bounded, Lipschitz Killing curvatures, Stratifications, Regular conditions}
\newcommand{\bb}{\mathbb}
\newcommand{\al}{\mathcal}
\newcommand{\bff}{\mathbf}
\newcommand{\loc}{{\rm loc}}
\newcommand{\cl}{{\rm cl}}
\begin{document}
\maketitle

\parskip .12cm

\begin{abstract} We prove that local Lipschitz Killing curvatures of definable sets in a polynomially bounded o-minimal structure are continuous along strata of Whitney stratifications and locally Lipschitz if the stratifications are (w)-regular. 
\end{abstract}

\section{Introduction}
The density of a set $A\subset \bb R^n$ at a point $x \in \bb R^n$ is the following limit if it exists
$$\Theta_d(A, x) = \lim_{t \to 0}\frac{\al H^d(A \cap \bff B_{(x, r)}^n)}{\mu_d r^d},$$
where $\bff B^n_{(x,r)}$ is the ball in $\bb R^n$ centered at $x$ of radius $r$, $\al H^d$ is the $d$-dimensional Hausdorff measure and $\mu_d$ is the volume of the unit ball of dimension $d$. 

The existence of density of subanalytic sets was proved by  K. Kurdyka and G. Raby in \cite{kr}. In fact, if $A$ is a smooth manifold then density of  $A$ is $1$ at every point in $A$.  The notion of density  of $A$, therefore, not interesting at regular points.  If $A$ is a stratified set meaning the set $A$ together with its stratification, all singular points of $A$ will be in strata of dimensions less than $d$. It is natural to ask how the density  of $A$ varies along  those strata. 
In 1988, D. Trotman conjectured that the density of subanalytic sets is continuous on the strata of Whitney stratifications (stratifications satisfying Whitney's (b)-regular condition). G. Comte, in his thesis (1998), proved this conjecture for  (w)-regular stratifications (see also \cite{comte1}).  In \cite{valette1}, G. Valette gave  the positive answer for the conjecture. He also proved that the density is locally Lipschitz if the stratification is (w)-regular.

G. Comte and M. Merle \cite{cm} generalized Comte's result to what are called local Lipschitz Killing curvatures, introduced by A. Bernig and L. Br\"ocker \cite{bb}. To be precise,  let $A$ be a compact subset of $\bb R^n$, for $k \in \{0, \ldots, n\}$,  the $k$-th Lipschitz Killing of the set $A$ is defined as follows
\begin{equation}\label{fm_lips_curv_1} \Lambda_k(A): = c(n,k)\int_{P \in \bb G_n^k} \int_{x\in P} \chi (A \cap \pi_P^{-1}(x)) d\al H^k(x) dP, 
\end{equation}
where $\chi$ is  the Euler-Poincar\'e characteristic,  $dP$ is the standard probability measure of the Grassmannian $\bb G_n^k$ ,  $\pi_P$ is the orthogonal projection from $\bb R^n$  onto $P$ and  $c(n,k) = \Gamma(\frac{n+1}{2})\Gamma(\frac{1}{2})/\Gamma(\frac{n-k+1}{2})\Gamma(\frac{k+1}{2})$ with $\Gamma(s) = \int_0^\infty e^{-t}t^{s-1}dt$ .  Let $x\in A$. If the limit 
\begin{equation}\label{fm_local_Lip_cuv}
\Lambda_k^{\loc}(A, x): = \lim_{r\to 0}\frac{1}{\mu_k r^k} \Lambda_k(A \cap \bff B^n_{(x,r)})
\end{equation}
exists we call it  the $k$-th \textbf{local Lipschitz Killing curvature} of $A$ at $x$.  We would like to notice,  as a consequence of the Cauchy-Crofton formula, that $\Lambda_d(A, x) = \Theta_d(A, x)$.  In \cite{cm}, the authors proved that local Lipschitz Killing curvatures  of subanalytics sets exist and are continuous along strata of (w)-regular stratifications. 

In this paper we deal with the question whether the result of Comte and Merle still holds for Whitney stratifications.  We consider the problem in the framework of o-minimal structures which is considered as a generalization of semialgebraic and subanalytic geometries. By improving the techniques developed in \cite{valette1} to study the invariance of the density, we prove that local Lipschitz Killing curvatures of a definable sets in a polynomially bounded o-minimal structure are continuous along strata of Whitney stratifications. This result does not hold for o-minimal structure which is not polynomially bounded. An example in \cite{tv} show that, in general, the density of a definable set is not continuous along strata of Whitney stratifications.  We show furthermore that if the stratifications are (w)-regular, then the local Lipschitz Killing curvature are locally Lipschitz. This fact is true for every o-minimal structure, it is not necessary to assume to be polynomially bounded.

 In Section 2, we recall definitions of o-minimal structures and stratification of definable sets. 
 Section 3 presents some preliminary results of definable sets that is a preparation for proving Proposition \ref{prop_main_Killing} which is the key of the paper. The results of the continuity of local Lipschitz Killing curvatures for Whitney stratifications and for $(w)$-regular stratifications are presented in Section 4. 

\section{Definable stratifications}
\subsection{O-minimal structures}
A \textbf{structure} on the real closed  field $(\bb R,+,.)$ is a family $\al D = (\al D_n)_{n\in \bb N}$ satisfying the following properties:
\begin{itemize}
\item[(1)] $\al D_n$ is a boolean algebra of subsets of $\bb R^n$,

\item[(2)] If $A \in \al D_n$ then $\bb R \times A \in \al D_{n+1}$ and $A \times \bb R \in \al D_{n+1}$,

\item[(3)] $\al D_n$ contains the zero sets of all polynomials in $n$ variables,

\item[(4)] If $A \in \al D_n$ then its image under projection onto the first $n-1$ coordinates in $\bb R^{n-1}$ is in $\al D_{n-1}$.
\end{itemize}
A structure $\al D$ is said to be\textbf{ o-minimal} if in addition
\begin{itemize}
\item[(5)] Any set $A \in \al D_1$ is a finite union of open intervals and points.
\end{itemize}

Elements of $\al D_n$ for any $n$ are called \textbf{$\al D$-sets} (or definable sets) of $\al D$. A map between two $\al D$ sets is said to be a \textbf{$\al D$-map} (or definable map) if its graph is a $\al D$-set. 

A structure $\al D$ is said to be \textbf{polynomially bounded} if for every $\al D$-function $f: \bb R \to \bb R$, there exist $a > 0$ and $n \in \bb N$ such that $|f(x)| \leq x^n$ for all $x > a$. 

The class of semialgebraic sets, the class of globally subanalytic sets are examples of polynomially bounded o-minimal structures. We refer the reader to \cite{dries, dm1, coste, loi3} as good references for studying more about o-minimal structures. In the paper, we will use  the following properties of $\al D$-sets without citations.
 
1) Uniform Bounded on Fibres (see \cite{dries}, Chapter 3, (2.13))

2) Curve Selection (see \cite{dries}, Chapter 6, (1.5))

3) Hardt's triviality theorem (see \cite{dries}, Chapter 9, (1.7) or \cite{coste}, Theorem 5.22).

\subsection{Stratifications}
\begin{dfn}\rm
Let $A$ be a $\al D$-subset of  $\bb R^n$. Let $p \in \bb N$. A \textbf{$C^p$ $\al D$-stratification} (or a stratification for simplicity)  of $A$ is a partition $\al S = \{S_\alpha\}_\alpha$ of $A$ into finitely many $C^p$, connected $\al D$-submanifolds of $\bb R^n$, called \textbf{strata}, such that the frontier condition is satisfied meaning  if $S_\alpha, S_\beta \in \al S$, $S_\alpha\neq S_\beta$, $\cl(S_\alpha) \cap S_\beta \neq \emptyset$, then $S_\beta \subset S_\alpha$.
\end{dfn}
If we denote by $S^k$ the union of strata of dimensions $\leq k$ of the stratification $\al S$,  then $A$ can be described as filtration of skeletons 
$$A = S^d \supseteq S^{d-1} \supseteq \ldots \supseteq S^l \neq \emptyset$$
such that each difference $\mathring{S}^k = S^k \setminus S^{k-1}$ is an $k$-dimensional $C^p$ $\al D$-submanifold of $\bb R^n$ or empty. Connected components of $\mathring{S}^k$ coincide with strata of dimension $k$ of $\al S$. 

The set $A$ together with its stratification $\al S$ is called \textbf{stratified set} and denoted by $(A, \al S)$. A vector field $v$ defined on $A$ is called \textbf{stratified vector field} with respect to the stratification $\al S$ if $v(x) \in T_x S$, $x \in S \in \al S$. 

Suppose that ($\gamma$) is some regularity condition defined on pairs of submanifolds of $\bb R^n$. A stratification of $A$ is said to  be  ($\gamma$)-regular if the condition $(\gamma)$ is satisfied for every pair of strata of the stratification. In the following, we recall definitions of regularity conditions which we will deal with in next sections. 

Let $X, Y$ be $C^1$  $\al D$-submanifolds of $\bb R^n$. Let $z \in \cl(X) \cap Y$. 

\textbf{Whitney condition (b)}--- for any sequence $\{x_k\}_{k \in \bb N}$ in $X$ and any sequence $\{y_k\}_{k \in \bb N}$ in $Y$, each converging to $z$ such that the sequence of tangent spaces $\{T_{x_k}X\}_{k \in \bb N}$ converges to $\tau \in \bb G_n^{\dim X}$, and the sequence of vectors $\frac{x_k - y_k}{\|x_k - y_k\|}$ converges to a unit vector $v$, one has $v \in \tau$. 

\textbf{Kuo's ratio test (r)}--- $\delta (T_{\pi_Y(x)}Y, T_xX)\ll \dfrac{\|x-\pi_Y(x)\|}{\|x - z\|} ,$ where $x\in X$ and $x$ is converging to $z$.

\textbf{Verdier condition (w)}--- there exist a neighborhood $U_z$ of $z$ in $\bb R^n$ and a constant $C>0$ such that 
\begin{equation*}
\delta (T_yY, T_xX) \leq C \|x - y\|, \hspace{1cm} \forall x \in U_z \cap X, \forall y \in U_z \cap Y.
\end{equation*}
Here, $\pi_Y$ denotes the locally orthogonal projection onto $Y$ and \[ \delta (M, N):= 
\begin{cases}
 \sup_{\substack{\\x \in M, \|x\| = 1}} \|x - P_N(x)\|, & \text{if } M \not\equiv 0 \\
  0, & \text{if } M \equiv 0\\
 \end{cases}
 \]
for $M, N$ are vector subspaces of $\bb R^n$, where $P_N$ is the orthogonal projection from $\bb R^n$ onto $N$. 
 
\begin{rem} \label{rem_stra}
 In o-minimal setting, we have $\rm (w) \Rightarrow (r) \Rightarrow (b)$. Moreover, if $\al D$ is polynomially bounded and  $\dim Y = 1$ then $\rm (r) \Leftrightarrow  (b)$ (see \cite{tv}, \cite{nhan1}).
\end{rem}

\section{Preliminary results of $\al D$-sets}
Let $A$ be a $\al D$-subset of $\bb R^n$ (consider $A$ as a family of $\al D$-subsets of $\bb R^{n-k}$ parameterized by $\bb R^k$). Let $U \subset \bb R^k$.  We denote by $A|_U := \{x = (q,t)\in \bb R^{n-k}\times \bb R^k : x \in A, t \in U\}$. Let $t \in \bb R^k$. The set $A_t: = \{q \in \bb R^{n-k}: (q,t) \in A\}$ is called the\textbf{ fibre of $A$ at the point $t$}. Let $\varepsilon> 0$. The \textbf{neighborhood of $A$ of radius $\varepsilon$ }is defined as follows
\begin{equation}
\al N(A, \varepsilon):= \{x \in \bb R^n: d(x, A) \leq \varepsilon\},
\end{equation}
where $d$ denotes the Euclidean distance in $\bb R^n$.  Assume that $\dim A = l$. For $r> 0$, we define
\begin{equation}
\psi(A, r): = \al H^l \left(A \cap \bff B^n_{(0,r)}\right).
\end{equation}
\begin{pro}\label{pro_vol}
Let $A\subset \bb R^n \times \bb R^m$ be a $\al D$-set. Consider $A$ as a family of $\al D$-sets in of dimensions at most $l$ in $\bb R^n$ parameterized by $\bb R^m$.  Then, there exists a constant $C> 0$ such that for any $r > 0$ and any $t \in \bb R^m$, we have
\begin{enumerate}
\item $\psi(A_t, r) \leq C r^l$.
\item  If $l < n$, $\forall \varepsilon > 0$, then 
$$\psi(\al N( A_t, \varepsilon), r) \leq C r^{n-1} \varepsilon.$$
\end{enumerate}
\end{pro}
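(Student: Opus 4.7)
\emph{Plan.} For (1), the key tools are the Cauchy-Crofton formula and the Uniform Bounded on Fibres (UBF) property. Writing $\bff B := \bff B^n_{(0,r)}$, Cauchy-Crofton gives
\begin{equation*}
\al H^l(A_t \cap \bff B) = c(n,l)\int_{P \in \bb G_n^l} \int_{y \in P \cap \bff B} \#\bigl(A_t \cap \pi_P^{-1}(y)\bigr)\, dy\, dP,
\end{equation*}
so (1) reduces to bounding $\#(A_t \cap \pi_P^{-1}(y))$ uniformly in $(t,P,y)$. Regarding the correspondence $(t,P,y) \mapsto A_t \cap \pi_P^{-1}(y)$ as a single definable family parameterized by $\bb R^m \times \bb G_n^l \times \bb R^n$ and restricting to the (definable) set of parameters where the fiber is finite, UBF yields a uniform constant $N$ bounding the cardinality. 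Since $\dim A_t \le l$ and $\dim \pi_P^{-1}(y)= n-l$, for each $t$ the ``bad'' $(P,y)$ where the fiber is positive-dimensional form a measure-zero set and can be ignored in the integration. Consequently $\psi(A_t, r) \le c(n,l) N \mu_l r^l$.

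For (2), the plan is to apply (1) to the level sets of the distance function. The function $f_t(x) := d(x, A_t)$ is definable in $(x,t)$, $1$-Lipschitz in $x$, and satisfies $|\nabla f_t|=1$ a.e.\ on $\bb R^n \setminus \cl(A_t)$ (a standard property of Euclidean distance). Since $l<n$ implies $\al H^n(\cl(A_t))=0$, the coarea formula gives
\begin{equation*}
\al H^n\bigl(\al N(A_t, \varepsilon) \cap \bff B^n_{(0,r)}\bigr) = \int_0^\varepsilon \al H^{n-1}\bigl(\{f_t=s\} \cap \bff B^n_{(0,r)}\bigr)\, ds.
\end{equation*}
The family $\{\{f_t = s\}\}_{(t,s)\in \bb R^{m+1}}$ is definable, and each member has dimension at most $n-1$ (the level set $\{f_t=0\}=\cl(A_t)$ has dimension $\le l<n$, and each level set $\{f_t = s\}$ with $s>0$ bounds two open definable sets, hence has empty interior). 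Applying (1) to this family with $l$ replaced by $n-1$ yields a uniform bound $\al H^{n-1}(\{f_t=s\} \cap \bff B^n_{(0,r)}) \le C' r^{n-1}$; integrating over $s\in[0,\varepsilon]$ then gives (2).

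\emph{Main obstacle.} The delicate step is the uniform cardinality bound for part (1): one must correctly assemble the Grassmannian and footpoint parameters into a single definable family, pin down the definable locus on which fibers are finite (using that a $0$-dimensional definable set in the o-minimal sense is finite), and verify that its complement is Lebesgue-null in the Cauchy-Crofton integral. Notably this argument does not require $\al D$ to be polynomially bounded, since the exponent $l$ arises purely from the $l$-volume of flat slices $P \cap \bff B$ in the Grassmannian integral rather than from any growth estimate. Once this uniformity is secured, (2) follows from (1) via coarea in an essentially routine manner.
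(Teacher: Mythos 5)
Your proof of (1) is correct but takes a genuinely different route from the paper's. The paper relies on the quasi-convex decomposition of Kurdyka and Parusi\'nski (\cite{kp}, Prop.~1.4): after a definable change of coordinates, $A_t$ decomposes, up to a set of dimension $<l$, into a number of Lipschitz graphs that is bounded uniformly in $t$; the $l$-volume of each graph is comparable to the volume of its shadow in $\bb R^l$, and this reduces $l<n$ to the trivial case $l=n$. You instead expand $\al H^l(A_t\cap\bff B)$ by the Cauchy--Crofton formula and bound the slice cardinality $\#\bigl(A_t\cap\pi_P^{-1}(y)\bigr)$ uniformly by applying Uniform Bounded on Fibres to the single definable family over $\bb R^m\times\bb G_n^l\times\bb R^n$, discarding the locus of positive-dimensional slices, which is $\al H^l$-null inside each $P$ because $\dim A_t\leq l$. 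Both routes give the sharp exponent $r^l$ with a $t$-independent constant; yours trades the Lipschitz cell decomposition for integral-geometric machinery, which arguably matches the tenor of the rest of the paper, at the small cost of invoking Cauchy--Crofton for definable families and the measure-zero argument. Your proof of (2) coincides with the paper's: they set $A_t(\alpha)=\{x:d(x,A_t)=\alpha\}$, note $\dim A_t(\alpha)\leq n-1$, apply (1) with exponent $n-1$, and integrate via coarea. One small point of care: the cleaner reason that $\{f_t=s\}$ has dimension $\leq n-1$ for $s>0$ is that off $\cl(A_t)$ one has $|\nabla f_t|=1$ wherever $f_t$ is differentiable, so $f_t$ is nowhere locally constant there and no positive level set can contain an open set; the phrase ``bounds two open definable sets'' does not by itself force empty interior, though the claim is of course correct.
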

 
\begin{proof} We follow closely the proof of Propositions 3.06 and 3.07 in \cite{valette1}. 

(i)--- In the case $l = n$, the set $A_t\cap \bff B^n_{(0, r)}$ being included in the ball $\bff B^n_{(0,r)}$ for all $t\in \bb R^m$, the result is obvious (the constant $C$ is $\al H^l(\bff B^l_{(0,1)}$). If $l < n$, by removing a $\al D$-subset of dimension less than $l$, we can consider $A_t$ as a finitely disjoint union of graphs of Lipschitz mappings after a possible change of coordinates (the number of these graphs are bounded by a constant independent of $t$, see \cite{kp}, Proposition 1.4). The volume of such a graph is equivalent to the volume of its image under the projection onto $\bb R^{l}$. The conclusion then follows from the case $l = n$. 

(ii)--- We denote by
$$A_t(\alpha): = \{ x\in \bb R^n: d(x, A_t) = \alpha\}.$$
Since $\dim A_t < n$, $A_t(\alpha)$ is a $\al D$-set of dimension $n-1$. By the case (i), $\psi(A_t(\alpha), r) \leq C r^{n-1}$. Then, 
\begin{align*}
\psi(\al N( A_t, \varepsilon), r) &= \int_{\al N( A_t, \varepsilon)\cap \bff B^n_{(0,r)}} d\al H^{n} \\
& \leq \int_0^{\varepsilon} \psi(A_t(\alpha), r) d \al H^1(\alpha) \\
& \leq C r^{n-1} \varepsilon. 
\end{align*}

\end{proof}


\begin{lem}\label{lem_retract}
Let $A$ be a closed $\al D$-subset of $\bb R^n$ containing the origin. Suppose $\Sigma$ is a Whitney stratification of $A$ and $\{0\} \in \Sigma$. Then, there exists an $r_0 > 0$ such that for every $0< r< r' \leq  r_0$, there is a deformation retract from $A \cap \bff B^n_{(0, r')}$ onto $A \cap \bff B^n_{(0, r)}$ which preserves strata of the stratification $\Sigma$, i.e. there is a continuous mapping
$$F: A \cap \bff B^n_{(0, r')} \times \left[0, 1 \right] \to A \cap \bff B^n_{(0, r')}$$
such that $F(x, 0) = x, F(x, 1) \in A \cap \bff B^n_{(0, r)}$, $F(x,1)|_{x \in {A \cap \bff B^n_{(0, r)}}}= x$ and $F|_{S \times[0,1]} \subset S,\forall S \in \Sigma$.  

Moreover, \begin{equation}\label{fm_retract}
\|F(x, t) - x\| \leq 2t| r' - r|.
\end{equation} 
\end{lem}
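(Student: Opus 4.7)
The plan is to construct a continuous stratum-preserving vector field on $A$ near the origin whose flow realizes the retraction.

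First I would choose $r_0>0$ so small that on every stratum $S\in\Sigma$ with $S\neq\{0\}$ the function $\rho(x):=\|x\|$ has no critical points on $S\cap\bff B^n_{(0,r_0)}$, and moreover the stratum-wise gradient satisfies $\|\nabla_S\rho(x)\|\geq 1/2$ throughout. The absence of critical points follows from the Curve Selection Lemma applied stratum by stratum, together with the finiteness of $\Sigma$. The lower bound on $\|\nabla_S\rho\|$ comes from Whitney (a)-regularity of the pair $(\{0\},S)$: as $x\to 0$ along $S$ the tangent space $T_xS$ must contain in the limit the radial direction $x/\|x\|$, so $\nabla_S\rho(x)$, being the orthogonal projection of $x/\|x\|$ onto $T_xS$, has norm tending to $1$.

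Next, on each stratum $S\neq\{0\}$ I would set
$$v_S(x) \;=\; -\frac{\nabla_S\rho(x)}{\|\nabla_S\rho(x)\|^{2}},$$
so that $\tfrac{d}{ds}\rho(x(s)) = -1$ along integral curves of $v_S$ and $\|v_S(x)\|\leq 2$ on $S\cap\bff B^n_{(0,r_0)}$. Invoking Whitney (b)-regularity through Mather's theorem on the existence of control data for $\Sigma$, one assembles the $v_S$ into a single continuous stratified vector field $v$ on $A\cap\bff B^n_{(0,r_0)}$, extended by $v(0)=0$, that still satisfies $\tfrac{d\rho}{ds}=-1$ along its integral curves and $\|v\|\leq 2$. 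Let $\phi(x,s)$ denote the associated stratified flow; then $\|\phi(x,s)\| = \|x\|-s$ for as long as $\phi(x,s)\neq 0$.

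Finally, for $x\in A\cap\bff B^n_{(0,r')}$ and $t\in[0,1]$ I would define
$$F(x,t) \;=\; \phi\bigl(x,\,t\,\max(0,\|x\|-r)\bigr).$$
Then $F(x,0)=x$, $F(x,1)\in A\cap\bff B^n_{(0,r)}$, $F(x,1)=x$ on $A\cap\bff B^n_{(0,r)}$, and stratum preservation follows because $v$ is a stratified vector field. Continuity of $F$ is inherited from that of $v$ and $\phi$. The estimate \eqref{fm_retract} follows from
$$\|F(x,t)-x\| \;\leq\; \int_0^{t\,\max(0,\|x\|-r)}\|v(\phi(x,s))\|\,ds \;\leq\; 2t\,\max(0,\|x\|-r) \;\leq\; 2t(r'-r).$$

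The main obstacle is assembling the stratum-wise fields $v_S$ into a genuinely continuous global stratified vector field while preserving both the norm bound $\|v\|\le 2$ and the unit radial rate $\tfrac{d\rho}{ds}=-1$. This is exactly where Whitney (b) is essential, via Mather's compatible tube system: the construction must proceed inductively from the lowest-dimensional strata outward, modifying $v_S$ in a thin controlled tube around each previously-treated stratum so that the patches agree along the tube boundary, and the (b)-condition is what guarantees the modifications can be kept small enough that the uniform estimates survive on the nose.
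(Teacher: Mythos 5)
Your overall plan is the same as the paper's: take the stratum-wise orthogonal projection of $\partial_x\rho$, normalize so that $\rho$ decreases at unit rate along integral curves, and use the flow on the annulus $r\le\|x\|\le r'$ to define $F$, with the norm bound $\|v\|\le 2$ feeding directly into the estimate \eqref{fm_retract}. But the central step is not filled in, and the tool you invoke for it is the wrong one.

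You acknowledge at the end that the crux is to assemble the stratum-wise fields $v_S$ into a single \emph{continuous} stratified vector field while keeping $\|v\|\le 2$ and $\rho_*(v)=-1$, and you propose to do this ``via Mather's compatible tube system.'' Mather's control data, however, produces vector fields that are rugose/controlled and smooth on each stratum but are \emph{not} continuous across strata in general; continuity is exactly what fails there and exactly what your flow-and-estimate argument requires (otherwise $\phi$ need not be continuous on $A$, and $\|\phi(x,s)\|=\|x\|-s$ need not propagate from one stratum's closure to the next). The paper handles this by citing du~Plessis's theorem on \emph{continuous} controlled vector fields \cite{plessis}, then runs an explicit induction on skeleta: extend the already-constructed continuous field $\mu$ from $S^{d-1}$ to $S^d$ by du~Plessis, blend $\mu$ with the projected gradient $v$ via a partition of unity supported near $S^{d-1}$, verify $\|w-\partial_x\rho\|\le c\varepsilon$ survives the blend, and only \emph{afterwards} normalize by $\xi=-w/\langle\partial_x\rho,w\rangle$ to get $\rho_*(\xi)=-1$ and $\|\xi\|<2$ for small $\varepsilon$. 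Your sketch skips this entire induction-and-blend mechanism and replaces it by a one-sentence appeal to tube systems; as written, this is a genuine gap, not a detail.

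Two smaller corrections. First, your claim that $\|\nabla_S\rho\|\ge 1/2$ near the origin ``from Whitney (a)-regularity of the pair $(\{0\},S)$'' is misattributed: condition (a) for a pair whose lower stratum is a point is vacuous, since $T_0\{0\}=\{0\}$. The relevant input is Whitney (b) (the secant condition) applied to pairs $(\{0\},S)$, or equivalently transversality of the spheres $\bff S^{n-1}_{(0,r)}$ to all strata for small $r$, which is how the paper sets up $r_0$. Second, when you blend pre-normalized fields $v_S$ by a partition of unity, the affine condition $\langle\partial_x\rho,\cdot\rangle=-1$ is indeed preserved by convex combinations, so your ordering (normalize then patch) can be made to work, but you should say so explicitly rather than leaving it to ``modifications can be kept small enough''; the paper instead normalizes after patching, which avoids having to verify that the blended field stays bounded away from the kernel of $\langle\partial_x\rho,\cdot\rangle$.
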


\begin{proof}

Let $\rho: \bb R^n \to \bb R$, $x \mapsto \|x\|$ the distance function to the origin. 
Choose $r_0$ such that for every $0 < r \leq  r_0$, $\bff S^{n - 1}_{(0, r)} \pitchfork S$, $\forall S \in \Sigma$. The collection $\Sigma': = \{\bff B^n_{(0,r_0)}\cap S,  S \in \Sigma\}$ is then a Whitney stratification of $A \cap \bff B^n_{(0,r_0)}$.  Moreover, the restriction of $\rho$ to each stratum of $\Sigma'$ is submersive. 

For $x\in S$, $S \in \Sigma'$, set $v(x):= P_x (\partial_x \rho)$ where $P_x : \bb R^n \to T_{x}S$ is the orthogonal projection from $\bb R^n$ onto the tangent space of $S$ at $x$. Since $\rho|_S$ is a submersion, $v(x) \neq 0$, $\forall x \neq 0$.

Given an $\varepsilon > 0$, because $\Sigma'$ is a Whitney stratification, shrinking $r_0$ if necessary, we can assume that $\forall x\in A \cap \bff B^n_{(0,r_0)}$, $\|v(x) - \partial_x \rho\| \leq \varepsilon$.

Consider $\Sigma'$ as a filtration of skeletons $A \cap \bff B^n_{(0,r_0)} = S^d\supset S^{d-1}\supset \ldots \supset \{0\}$, where $ d = \dim A \cap \bff B^n_{(0,r_0)}$.  Notice that, generally, $v$ is not continuous on $S^d$ even though its restriction to each stratum in $S^d$ is continuous. We first construct a continuous stratified vector field on $S^d$, say $w$, by induction on skeletons so that 
$$\|w(x) - \partial_x\rho\| \leq c\varepsilon$$
where $c$ stands for some constant.

For $d = 0$, we take $ w =\mu = 0$.  Otherwise, write $S^d = \mathring{S}^d \cup S^{d-1}$ where $\mathring{S}^d$ is the union of the strata of dimension $d$ in $S^d$. By the inductive hypothesis, $\mu$ is a continuous stratified vector field on $S^{d-1}$ and $\|\mu(x) -  \partial_x\rho\| \leq c\varepsilon$. By a result of du Plessis \cite{plessis}, $\mu$ can be extended to a continuous stratified vector field on $S^d$. We will use the same notation $\mu$ for this extension. Since $\mu$  and $\partial_x\rho$ both are continuous on $S^d$, for each point $y \in S^{d-1}$ we can choose a neighborhood $U_y$ in $\bb R^n$ such that for any $x \in S^d \cap U_y$, we have $\|\mu(x) - \mu(y)\| \leq \varepsilon$  and $\|\partial_x \rho - \partial_y \rho\| \leq \varepsilon$. The union $\bigcup_{y \in S^{d-1}} U_y$ is an open neighborhood of $S^{d-1}$ in $\bb R^n$.  Define $T: =\bigcup_{y \in S^{d-1}}\left( U_y \cap \mathring{S}^d \right)$, and call it an open neighborhood of $S^{d-1}$ in $\mathring{S}^d$. Let $T'$ be another open neighborhood of $S^{d-1}$ in $\mathring{S}^d$ such that the closure of $T'$ in $\mathring{S}^d$ is contained in $T$ . Then we can choose a smooth partition $\{g_1, g_2\}$ of unity whose support refines $\{ \mathring{S}^d \setminus T',T\}$, and define 
\[ 
w(x) = 
\begin{cases}
\mu, &  x \in S^{d-1}\\
g_1 v + g_2 \mu,& x \in \mathring{S}^{d}
\end{cases}
\] 
It is clear that $w$ is a continuous stratified vector field. Now we show that  $\|w(x) - \partial_x \rho\| \leq c \varepsilon$. It suffices to check that the formula holds for every $x\in T$ since otherwise $w(x) = \mu(x)$ if $x \in S^{d-1}$ and $w(x) = v(x)$ if $x \in \mathring{S}^d \setminus T$ and obviously the formula holds. 

Suppose $x \in T$. By the construction of $T$, there is $ y \in S^{d - 1}$ such that $\|\mu(x) - \mu(y)\| \leq \varepsilon$  and $\|\partial_x \rho - \partial_y \rho\| \leq \varepsilon$. Hence, 
\begin{align*}
\|w(x) - \partial_x \rho\|  &= \|g_1(v (x) - \partial_x \rho) + g_2(\mu(x) - \partial_x \rho)\| \leq \varepsilon +  \|\mu(x) -     \partial_x \rho\|\\
& \leq \varepsilon +  \|\mu(x) - \mu(y)\| + \|\mu(y) -  \partial_y \rho\| + \| \partial_y \rho - \partial_x \rho\|\\
& \leq \varepsilon+ \varepsilon + c \varepsilon + \varepsilon  = (3 + c) \varepsilon.
\end{align*}
Since $w$ and $\partial_x\rho$ are continuous and $\|w(x) - \partial_x \rho\| \leq c\varepsilon$, $\| \partial_x \rho \| = 1$,
\[
\xi(x) =
\begin{cases} 
0, & \text{if } x =0\\
\frac{-w(x)}{\langle \partial_x \rho, w(x)\rangle}, & \text{othewise }
\end{cases}
\]

is well-defined and continuous on $S^d \setminus \{0\}$. We can, moreover, choose $\varepsilon$ small enough so that $ \| \xi(x) \| < 2$. 
 
Write $\Phi (x, t)$ as the flow generated by the vector field $\xi$. Since $\rho_{*}(\xi(x)) = -1$, if $x \in X\cap \bff B^n_{(0,r)}$ then $\Phi(x,s) \in X\cap \bff B^n_{(0,r-s)}$ for all $r \leq r_0$. The map defined as follows 
\[
F(x,t) = 
\begin{cases} 
\Phi(x, t(\|x\|- r)), & \text{if } x \in X\cap \left(\bff B^n_{(0,r')} - \bff B^n_{(0,r)}\right)\\
x, & \text{if } x \in X \cap \bff B^n_{(0,r)}
\end{cases}
\]
is actually the desired deformation retract.
\end{proof}

\begin{rem} As in the proof, we are able to require $|\xi|< C$ with any constant $C > 1$ by taking $\varepsilon$ sufficiently small.  
\end{rem}


\section{Continuity of local Lipschitz Killing curvatures}
\subsection{Lipschitz Killing curvatures of $\al D$ sets}
Let $A$ be a compact $\al D$-subset of $\bb R^n$.  By Uniform Bound on Fibers  and Hardt's triviality theorem, there is an integer $N \geq 0$ such that for every $P \in \bb G_n^k$, $0\leq k \leq n$, there is a partition of $P$ into $\al D$-sets 
$$K_{k,j}^P(A): = \{ x \in P: \chi \left(\pi_P^{-1}(x) \cap A\right) = j\},$$
$j \in \{-N, -N+1, \ldots, N\}$. The formula (\ref{fm_lips_curv_1}), therefore, becomes
\begin{equation}\label{fm_lips_curv_2}
\Lambda_k(A)= c(n,k)\int_{P \in \bb{G}_n^k} \sum_{j = 1}^N j \al H^k\left(K_{k,j}^P(A)\right) dP.
\end{equation}

Let $x \in A$.  By the same argument as in the proof of  Theorem 1.3 \cite{cm}, we can show that the local Lipschitz Killing curvatures $\Lambda^{\loc}_k(A, x)$ of $A$ at $x$ (see the formula (\ref{fm_local_Lip_cuv})) are well defined. It easy to verify that $\Lambda^{\loc}_0(A, x) = 1$ and $\Lambda^{\loc}_k(A, x) = 0$ for every $k > \dim A$.  

\subsection{Whitney condition (b)}\label{sec_whitney_b}
In this section, we suppose that $\al D$ is a polynomially bounded o-minimal structure,  $A$ is a closed $\al D$-subset of $\bb R^n$ with a Whitney stratification $\Sigma$. Let $Y$ be a stratum of $\Sigma$. We may assume locally that $Y = \{0\}^{n-k} \times \bb R^k$ where $k = \dim Y$ and  $\Sigma= \{Y, X_1, \ldots, X_m\}$ with $Y \subset \cl(X_i) \setminus X_i$, $\forall i \in \{1,\ldots, m\}$. 

\begin{lem}\label{lem_appro} Suppose that $\gamma(-\varepsilon, \varepsilon)$ is a $C^1$ $\al D$-curve in $Y$, $\gamma(0) = 0$. Then, there exist $\nu > 0$ and a germ of homeomorphism
$$h:  A|_{\gamma([0, \nu ])}\to  A_0 \times \gamma([0, \nu ]), \hspace{1cm} h(q, t) = (h_t(q),0, t),$$ and constants $C_1, C_2$ such that 
\begin{align*}
\|h_t(q) - q\| & \leq C_1 \|t\|^{1-e} r \\
\|h_t^{-1}(q) - q\| & \leq C_2 \|t\|^{1-e} r,
\end{align*} 
 $\forall (0,t) \in \gamma([0,\nu])$, $\forall q \in \bff B^{n-k}_{(0,r)}$ for which the above mappings are well-defined with $r$ sufficiently close to zero.
\end{lem}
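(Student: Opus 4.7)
My plan is to construct the homeomorphism $h$ by integrating a continuous stratified vector field $v$ on a neighborhood of $\gamma([0,\nu])$ in $A$ which extends $\dot\gamma$ on $Y$. The map $h^{-1}$ will be the trivialization obtained by flowing $A_0$ along $v$, and the displacement bound will reduce to a pointwise control of the component of $v$ transverse to $Y$.

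The pointwise estimate is where the Whitney (b) condition and polynomial boundedness of $\al D$ interact. On each stratum $X_i$, I take $v(x) \in T_x X_i$ to be the lift of $\dot\gamma(\pi_Y(x))$ which projects orthogonally onto $\dot\gamma(\pi_Y(x))$ on $Y$; such a lift is uniquely defined once the angle $\delta(T_{\pi_Y(x)} Y, T_x X_i)$ is small enough, and in that case the transverse part of $v(x)$ is bounded by $\delta(T_{\pi_Y(x)} Y, T_x X_i) \cdot \|\dot\gamma\|$. In the polynomially bounded o-minimal setting every definable function tending to zero does so at polynomial rate; combining this with Whitney (b), I expect an estimate of the form
\begin{equation*}
\delta(T_{\gamma(t)} Y, T_x X_i) \leq C\, \|x - \pi_Y(x)\| \cdot \|t\|^{-e},
\end{equation*}
for the exponent $e$ appearing in the statement, valid for $x \in X_i$ with $\pi_Y(x) = \gamma(t)$ close to the image of $\gamma$.

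Next the local pieces $v|_{X_i}$ must be assembled into a single continuous stratified vector field on a neighborhood of $\gamma([0,\nu])$ in $A$. I would follow the inductive partition-of-unity argument used in the proof of Lemma \ref{lem_retract} (based on du Plessis's extension theorem), building $v$ skeleton by skeleton while checking at each extension step that the bound $\|v(x) - \dot\gamma(\pi_Y(x))\| \leq C'\, \|x - \pi_Y(x)\| \cdot \|t\|^{-e}$ survives the gluing.

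Finally, let $\Phi$ be the flow of $v$ and set $h^{-1}_t(q) := \Phi(q,t)$, with $\gamma$ parameterized so that $\pi_Y(\Phi(q,s)) = \gamma(s)$. A Gronwall-type argument using the tangential bound shows that $\|\Phi(q,s) - \pi_Y(\Phi(q,s))\|$ stays bounded by a constant multiple of $r$ for all $s \in [0,t]$, and then
\begin{equation*}
\|h_t^{-1}(q) - q\| \leq \int_0^{\|t\|} \|v(\Phi(q,s)) - \dot\gamma(\gamma(s))\|\, ds \leq C \int_0^{\|t\|} r\, s^{-e}\, ds \leq C_2\, r\, \|t\|^{1-e},
\end{equation*}
and the symmetric estimate for $h_t$ follows by running $-v$. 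I expect the main technical obstacle to be preserving the polynomial estimate on the transverse part of $v$ through the partition-of-unity gluing across several strata; the polynomial boundedness of $\al D$ is essential here, since without it the convergence of tangent spaces forced by (b) could be only sub-polynomial, and the final bound $\|t\|^{1-e}$ would fail.
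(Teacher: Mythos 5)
The overall architecture -- lift $\dot\gamma$ to a stratified vector field, integrate, and bound the transverse drift by a Gronwall-type estimate -- is exactly the paper's strategy, but the justification you give for the crucial pointwise estimate $\delta(T_{\gamma(t)}Y, T_x X_i) \leq C\,\|x - \pi_Y(x)\|\,\|t\|^{-e}$ is where the real content lives, and your sketch of it is not correct as stated. Saying that ``every definable function tending to zero does so at polynomial rate'' and then ``combining with (b)'' does not produce a two-sided comparison of the definable quantity $\delta(T_{\pi(x)}Y,T_xX_i)$ against $\|x-\pi(x)\|/\|\pi(x)\|^e$ with a constant uniform in $x$. The paper's actual mechanism is a two-step reduction that your proposal never performs. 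First, restrict the stratification to the arc $Y':=\gamma(-\varepsilon,\varepsilon)$; the point of working over a curve rather than all of $Y$ is that for a one-dimensional base, Whitney (b) in a polynomially bounded o-minimal structure is equivalent to Kuo's ratio test (r) (Remark~\ref{rem_stra}, via \cite{tv,nhan1}) -- an equivalence which fails when $\dim Y>1$. Second, by Orro--Trotman \cite{ot}, polynomial boundedness upgrades (r) to the $(r^e)$-inequality $\delta(T_{\pi(x)}Y',T_x(X_i|_{Y'}))\|\pi(x)\|^e\le C\|x-\pi(x)\|$ with some $0\le e<1$. Only the combination of these two facts yields an exponent $e<1$, which is what makes $\int_0^{\sigma(t)}s^{-e}\,ds$ converge in the Gronwall step; your argument has no source for $e<1$.

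A secondary point: the partition-of-unity gluing you borrow from Lemma~\ref{lem_retract} is unnecessary and introduces the very difficulty you flag at the end. The paper defines the field stratumwise (the orthogonal projection $w(x)=P_x(\mu(\pi(x)))$ onto $T_x(X_i|_{Y'})$, then normalized so that $\pi_* v=\mu\circ\pi$) and integrates the resulting stratified flow directly; no continuity of $v$ across strata is invoked. What drives the estimate is $\|v(x)-v(\pi(x))\|\lesssim \|x-\pi(x)\|\,\|\pi(x)\|^{-e}$, which already holds on each stratum from $(r^e)$, so there is no ``preservation through gluing'' step to verify. If you wish to retain the gluing, you would then also need to check that the glued field still projects exactly onto $\mu\circ\pi$ (so that $\pi\circ\Phi_x(s)=\Phi_{\pi(x)}(s)$), which your proposal does not address.
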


\begin{proof}
We may write $\Sigma = \{Y= X_0, X_1, \ldots , X_m\}$. Let $Y' : = \gamma(-\varepsilon, \varepsilon)$. Denote by $\Sigma': =\{Y', X_1|_{Y'}, \ldots, X_m|_{Y'}\}$. Then, $\Sigma'$ is a Whitney stratification of $A|_{Y'}$ (consider in a neighborhood of $Y'$ in $\bb R^n$).

Since $\dim Y' = 1$ and $\Sigma'$ is a Whitney stratification, $\Sigma'$ satisfies the condition (r) along $Y'$ (see Remark \ref{rem_stra}). As proven in \cite{ot}, there exist  $0\leq  e< 1$ and a constant $C> 0$ such that 
\begin{equation}\label{eqq_1}
\delta(T_{\pi(x)} Y', T_x(X_i|_{Y'}))\|\pi(x)\|^e \leq C \|x - \pi(x)\|,
\end{equation}
where $x\in X_i|_{Y'} \cap U$ with $U$ is a small neighborhood in $\bb R^n$ of the origin,  $\pi: \bb R^n \to \bb \{0\}^{n-k}\times \bb R^k$, $\pi(q, t) = (0,t)$. 

Let $\mu (x): = -\gamma'(x)/\|\gamma'(x)\|$. Set $w(x): = P_x\left(\mu(\pi(x))\right) \in T_x (X_i|_{Y'})$, where $P_x$ denotes the orthogonal projection onto the tangent space $T_x (X_i|_{Y'})$, $x \in X_i|_{Y'}\cap U$. Notice that $w(x) = \mu(x)$ for every $x \in Y'$. It follows from (\ref{eqq_1}) that
\begin{equation}\label{equ_2.1}
\|w(x) - w(\pi(x))\|\|\pi(x)\|^e  \leq C \|x - \pi(x)\|.
\end{equation}
The condition Whitney (a) ensures that $w$  and $\pi_{*}(w(x))$ are bounded below away from $0$, where $\pi_{*}$ denotes the tangent map of $\pi$. Thus, the vector field $$v(x): = \frac{w(x)}{\|\pi_{*}(w(x))\|},$$ is well defined, satisfying ($\ref{equ_2.1}$) and $\pi_{*}(v(x)) = v(\pi(x)) = \mu(\pi(x))$. 

Denote by $\Phi$ the flow generated by the vector field $v$. Let $x= (q, t) \in A|_{Y'} \cap U$, $(0,t) \in Y'$, and denote by  $\sigma(t)$ the length of the arc $\gamma$ from the origin to the point $(0,t)$.  By shrinking $U$ if necessary, we may assume that $\sigma(t) \sim \|t\|$ for all $(0,t) \in Y' \cap U$. 

For $0 <  s \leq \sigma(t)$, let $f(s):=\|\Phi_x(s) - \pi(\Phi_x(s))\|$. We have 
$$f'(s)  = \frac{\langle \Phi_x'(s) - \pi (\Phi_x'(s)), \Phi_x(s) - \pi(\Phi_x(s))\rangle}{\|\Phi_x(s) - \pi(\Phi_x(s))\|}\leq 
\|\Phi_x'(s) - \pi(\Phi'_x(s))\|.$$
Here
$$\|\Phi_x'(s) - \pi(\Phi'_x(s))\|  = \|v(\Phi_x(s)) - v(\pi(\Phi_x(s)))\|.$$
Associating with ($\ref{equ_2.1}$),
\begin{equation*}\label{equ_2.3} 
|f'/f| \leq C\|\pi(\Phi_x(s))\|^{-e}.
\end{equation*}
Since $\pi(\Phi_x(s)) = \Phi_{\pi(x)} (s)$ and $\|\Phi_{\pi(x)} (s)\| \sim s$, $|f'/f| \lesssim s^{-e}$. By integrating with respect to $s$ over $[0, s]$ (note that $f(0) = \|q\|$) we get
\begin{equation*}\label{equ_2.4}
\exp \left(\frac{-s^{(1-e)}}{1-e}\right) \|q\| \lesssim f(s) \lesssim \exp \left(\frac{s^{1-e}}{1-e}\right) \|q\|.
\end{equation*}
or, equivalently, 
$$f(s) \sim \|q\|.$$
Write
$$ \Phi_{x}(\sigma(t)) = (  \Phi^1_{x}(\sigma(t)),  \Phi^2_{x}(\sigma(t))) \in \bb R^{n-k}\times \bb R^k.$$
Then, 
\begin{align*}
\|\Phi^1_x(\sigma(t)) - q\|&= \bigg\|\int_0^{\sigma(t)} \frac{d}{ds}(\Phi_x(s) - \pi(\Phi_x(s))ds \bigg\| 
= \bigg\| \int_0^{\sigma(t)} (\Phi'_x(s) - \pi(\Phi_x'(s))ds \bigg\| \\
&=  \bigg\| \int_0^{\sigma(t)} (v(\Phi_x(s)) - v(\pi(\Phi_x(s)))ds \bigg \| \\
&\leq \int_0^{\sigma(t)} \frac{\|\Phi_x(s) - \pi(\Phi_x(s))\|}{\|\pi(\Phi_x(s))\|^e}  \lesssim \int_0^{\sigma(t)} f(s) s^{-e} ds\\ 
 & \lesssim \|q\|\int_0^{\sigma(t)} s^{-e}ds  \lesssim \|q\|\sigma(t)^{1-e} \lesssim \|q\|\|t\|^{1-e} .
\end{align*}
Hence, the desired homeomorphism is  given by $h_t(q) = \Phi^1_{(q,t)}(\sigma(t))$. 
\end{proof}

The following proposition is the key result for the proof of the main theorem. 
 
\begin{pro}\label{prop_main_Killing} 
Fix $0 \leq l \leq n - k$. There exists $C > 0$ such that for every $\varepsilon> 0$, there exists a neighborhood $U_{\varepsilon}$ of $0$ in $Y$ such that $\forall (0, t)\in U_\varepsilon$, $\exists r_t >0$, $\forall P \in \bb G_{n-k}^l$ and for every $0 < r \leq  r_t$, there is  a $\al D$-subset $ \Delta(P,\varepsilon, r, t)$ of $P$ with 
$$\al \psi\left( \left( \Delta(P,\varepsilon, r, t)\right), r\right) \leq C \varepsilon  r^l$$
such that for any $x \in \left( \bff B^{n-k}_{(0,r)}\cap P\right) \setminus  \Delta(P,\varepsilon, r, t)$,
\begin{equation}
\chi \left(\pi_P^{-1}(x) \cap  A_t \cap \bff B^{n-k}_{(0,r)}\right) = \chi \left(\pi_P^{-1}(x) \cap  A_0 \cap  \bff B^{n-k}_{(0,r)} \right),
\end{equation}
where $\pi_P$ is the orthogonal projection form $\bb R^{n-k}$ onto $P$. 

\end{pro}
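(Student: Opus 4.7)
The argument is driven by Lemma \ref{lem_appro}. For each $(0,t)\in Y$ close to $0$, apply it to the segment $s\mapsto (0,s\,t/\|t\|)$ to obtain a germ of homeomorphism $h_t\colon A_0\to A_t$ with $\|h_t(q)-q\|\le C_1\|t\|^{1-e}r$ on $\bff B^{n-k}_{(0,r)}$ (and the symmetric bound for $h_t^{-1}$). Set $\eta:=C_1\|t\|^{1-e}$; then $\eta\to 0$ as $(0,t)\to 0$, and since $\pi_P$ is $1$-Lipschitz we have $\|\pi_P(h_t(q))-\pi_P(q)\|\le \eta r$ and $|\,\|h_t(q)\|-\|q\|\,|\le \eta r$.

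The set $\Delta(P,\varepsilon,r,t)$ will be an $\eta r$-tube around a definable discriminant. Apply Hardt's triviality theorem, uniformly in $(P,r,t)\in \bb G_{n-k}^l\times \bb R_{>0}\times \bb R^k$, to the family $\{\pi_P^{-1}(x)\cap A_t\cap \bff B^{n-k}_{(0,r)}\}_{x\in P}$; this yields a definable set $\Xi_{P,r,t}\subset P$ of dimension at most $l-1$ outside of which $x\mapsto \chi(\pi_P^{-1}(x)\cap A_t\cap \bff B^{n-k}_{(0,r)})$ is locally constant. Enlarge $\Xi$ to include the (still lower-dimensional) set of $x$ for which the sphere $\bff S^{n-k-1}_{(0,\rho)}$ fails transversality to the top strata of $\pi_P^{-1}(x)\cap A_t$ for some $\rho\in [r,(1+\eta)r]$. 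Set
\[
 \Delta(P,\varepsilon,r,t):=\al N\bigl(\Xi_{P,r,0}\cup \Xi_{P,r,t},\,\eta r\bigr)\cap P\cap \bff B^{n-k}_{(0,r)}.
\]
Applying Proposition \ref{pro_vol}(ii) inside the $l$-plane $P$ yields $\psi(\Delta(P,\varepsilon,r,t),r)\le C' r^{l-1}\cdot \eta r=C'\eta r^l$, with $C'$ uniform in $(P,r)$ by definability. Taking $U_\varepsilon:=\{(0,t)\in Y:\eta\le \varepsilon\}$, equivalently $\|t\|^{1-e}\le \varepsilon/C_1$, gives the required bound with $C:=C'$.

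For $x\in (P\cap \bff B^{n-k}_{(0,r)})\setminus \Delta$, the set $\{x'\in P:\|x'-x\|\le \eta r\}$ misses both $\Xi_{P,r,0}$ and $\Xi_{P,r,t}$ and the added boundary-transversality loci. Write $F_s^+(y):=\pi_P^{-1}(y)\cap A_s\cap \bff B^{n-k}_{(0,(1+\eta)r)}$. Hardt's trivialization over this small $l$-ball produces product structures
\[
 \pi_P^{-1}\bigl(\{x':\|x'-x\|\le \eta r\}\bigr)\cap A_s\cap \bff B^{n-k}_{(0,(1+\eta)r)}\ \cong\ \{x':\|x'-x\|\le \eta r\}\times F_s^+(x)
\]
for $s\in\{0,t\}$, and the boundary transversality baked into $\Xi$ implies $\chi(F_s^+(x))=\chi(\pi_P^{-1}(x)\cap A_s\cap \bff B^{n-k}_{(0,r)})$ via a radial deformation retract in the spirit of Lemma \ref{lem_retract} performed inside each fibre. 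Finally $h_t$ maps the $s=0$ product into the $s=t$ one (its $\eta r$-sized sideways and radial overshoots being precisely what the enlarged ball and $\eta r$-thickened tube were designed to absorb), inducing an equivalence that restricts on $\{x\}$-fibres to $F_0^+(x)\simeq F_t^+(x)$. Combining the three equalities delivers $\chi(\pi_P^{-1}(x)\cap A_0\cap \bff B^{n-k}_{(0,r)})=\chi(\pi_P^{-1}(x)\cap A_t\cap \bff B^{n-k}_{(0,r)})$.

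The chief obstacle is this last comparison: $h_t$ respects neither the $P$-fibration nor the spheres $\bff S^{n-k-1}_{(0,\rho)}$, so the Euler-characteristic equality cannot be read off directly from $h_t$ and must be routed through Hardt triviality plus a radial retract. The thickness $\eta r$ of $\Delta$ is the smallest choice that simultaneously absorbs both the sideways and the radial displacement of $h_t$ while keeping $\psi(\Delta,r)\lesssim \varepsilon r^l$; choosing it larger would spoil the volume bound, smaller would fail to capture all jumps of $\chi$ between $t=0$ and $t$.
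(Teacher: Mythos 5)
Your argument works when $\dim Y = 1$, and in that case it essentially matches the paper's Case 1 (thicken a Hardt discriminant by a scale proportional to $r$, then route the Euler characteristic comparison through the near-isometry from Lemma \ref{lem_appro}). The genuine gap is the case $\dim Y > 1$. You apply Lemma \ref{lem_appro} separately to each ray $s\mapsto (0, s\,t/\|t\|)$, but the constants $C_1$, the exponent $e$, and the radius $\nu$ furnished by that lemma all depend on the chosen curve $\gamma$. There is no claim, and it is not obvious, that they can be taken uniformly over all directions $t/\|t\|$; in particular $e$ could a priori approach $1$ as the direction varies, in which case $\eta = C_1\|t\|^{1-e}$ does not tend to $0$ uniformly, and the set you propose as $U_\varepsilon := \{(0,t): \eta \leq \varepsilon\}$ need not contain any neighborhood of the origin in $Y$. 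Since the proposition requires a \emph{single} neighborhood $U_\varepsilon$ on which the conclusion holds for every $(0,t)$, this breaks the proof.

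The paper avoids exactly this problem by splitting into two cases. When $\dim Y = 1$ it applies Lemma \ref{lem_appro} directly, which is legitimate because there is essentially one curve. When $\dim Y > 1$ it instead observes that the set $\Omega$ of points $(0,t)\in Y$ for which the desired Euler-characteristic identity holds (with the prescribed $\Delta$) is a $\al D$-set, because $\chi(\pi_P^{-1}(x)\cap A_t\cap \bff B^{n-k}_{(0,r)})$ is a definable function of all parameters. If $\Omega$ failed to contain a neighborhood of the origin, Curve Selection would produce a definable arc in $Y\setminus\Omega$ tending to $0$, and applying the one-dimensional case along that arc gives a contradiction. This definability-plus-curve-selection reduction is the step missing from your write-up, and it is the nontrivial device that makes the statement hold uniformly on a full neighborhood rather than just ray by ray. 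A secondary, smaller issue: the final comparison via $h_t$ and ``Hardt trivialization over a small $l$-ball'' is stated rather informally; the paper makes this rigorous with explicit nested inclusions $U_1\subset h_t(W_1)\subset U_2\subset h_t(W_2)$ and a diagram of homology isomorphisms built from the stratified retracts of Lemma \ref{lem_retract}. Your sketch is in the right spirit but would need that level of precision to carry the isomorphism of Euler characteristics.
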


\begin{proof}  
Since $\Sigma$ is a Whitney stratification, for each $t \in Y$ there is a  $r_t > 0$ such that  $\forall r \leq r_t$, the collection $\{S_t \cap \mathring{\bff B}^{n-k}_{(0, r)}, S_t \cap  \bff S^{n-k-1}_{(0, r)}\}_{S \in \Sigma}$ forms a Whitney stratification of $A_t \cap \bff B^{n-k}_{(0,r)}$, denoted by $\al S_t^r$. By Lemma \ref{lem_retract}, shrinking $r_t$ if necessary, we may assume that for $0< r < r'< r_t$, there is a deformation retract 
$$F_t^{r, r'}:  A_t \cap \bff B^{n-k}_{(0, r')} \times [0,1] \to A_t \cap \bff B^{n-k}_{(0, r')} $$
from $A_t \cap \bff B^{n-k}_{(0, r')}$ onto $A_t \cap \bff B^{n-k}_{(0, r)}$  preserving the strata of $\al S_t^{r_t}$ and 
$$\|F_t^{r,r'}(q,s) - F_t^{r, r'}(q,0)\| \leq 2s|r'-r|.$$

For $P \in \bb G_{n-k}^l$, consider the restriction of $\pi_P$ to $A_0 \cap \bff B^{n-k}_{(0, r + 2\varepsilon r)} \cup A_t \cap \bff B^{n-k}_{(0, r + 3\varepsilon r)} $. By Hardt's triviality theorem, there exists a partition of $P \cap \bff B^{n-k}_{(0,r+ 3\varepsilon r)}$ into finitely many $\al D$-sets such that  $A_0 \cap \bff B^{n-k}_{(0, r + 2\epsilon r)} \cup A_t \cap \bff B^{n-k}_{(0,r +  3\varepsilon r)} $ is definably trivial along elements of the partition (with respect to the projection map $\pi_P$) and  the trivialization over these elements is compatible with all strata of  $\al S^r_0, \al S^{r + 2\varepsilon r}_0, \al S^r_t, \al S^{r+\varepsilon r}_t, \al S^{r + 3\varepsilon r}_t$. 

Denote by $\Delta_{P,r, t}^1, \ldots, \Delta_{P,r, t}^\nu$ the elements of dimension $l$ of the partition and $\partial \Delta_{P,r, t}^1, \ldots, \partial \Delta_{P,r, t}^\nu$ their corresponding topological boundaries. Set
$$  \Delta(P,\varepsilon,r, t):= \bigcup_{i=1}^{\nu} \al N (\partial \Delta_{P,r, t }^i, 10 \varepsilon r). $$
Clearly, $ \Delta(P,\varepsilon,r, t)$ is a $\al D$-subset of $P$ of dimension less than $l$. By Proposition \ref{pro_vol}, we have $$\psi\left( \Delta(P,\varepsilon, r, t), r\right) \leq C \varepsilon r^l$$ for some $C> 0$ (note that $C$ is  independent of $(P, \varepsilon, r, t)$).

Let $x \in P$. We define 
$$ A_t^r(x, \lambda) := A_t \cap \bff B^{n-k}_{(0, r)}\cap \pi_P^{-1}(\bff B^{n-k}_{(x, \lambda)} \cap P).$$

\textit{\underline{Claim}}: For  $x \in \left(\bff B^{n-k}_{(0,r)}\cap P\right) \setminus  \Delta(P,\varepsilon,r, t)$, the homomorphisms of homology groups induced by the following inclusion maps
\begin{align*}
U_1:= A_0^{r}(x,\varepsilon r)\hookrightarrow{} U_2:=A_0^{r + 2\varepsilon r}  (x, 3\varepsilon r) & \hspace{1cm} \text{(I)}\\
W_1:=A_t^{r + \varepsilon r}(x,2\varepsilon r)  \hookrightarrow W_2:= A_t^{r + 3\varepsilon r} (x,4\varepsilon r) & \hspace{1cm}\text{(II)}\\
W_3:=A_t^{r}(x,\varepsilon r)  \hookrightarrow W_2:= A_t^{r + 3\varepsilon r} (x,4\varepsilon r) & \hspace{1cm}\text{(III)}
\end{align*}
are isomorphisms. 

We will give the proof of (I) (using the same arguments  we can get the proofs of (II) and (III)). 

Let $x\in  \bff B^{n-k}_{(0, r)} \cap P \setminus \Delta(P, \varepsilon, r, t)$. There exists $j \in \{1, \ldots, \nu\}$ such that $ x\in \Delta^j_{P, r, t}$. By the definition of $\Delta^j_{P, r, t}$, $\bff B^{n-k}_{(x, 10\varepsilon r)}\cap P \subset \Delta^j_{P, r, t}$.  Since the trivialization of $ A_0 \cap \bff B^{n-k}_{(0, r+ 2\varepsilon)}\cup A_t \cap \bff B^{n-k}_{(0, r + 3\varepsilon r)}$ over $\Delta^j_{P, r, t} $ is compatible with all strata of  $\al S_0^r$ and $\al S_0^{r + 2\varepsilon r}$,  $\forall \varrho \in \{r, r + 2\varepsilon r\}$ and $\forall \lambda, \lambda'$ such that $0< \lambda < \lambda' \leq 10 \varepsilon r$, there is a deformation retract, denoted by $\Psi^{\varrho}_0(x, \lambda, \lambda')$, from $A_0^{\varrho}(x, \lambda')$ onto $A_0^{\varrho}(x, \lambda)$ which preserves the strata of $\al S_0^\varrho$. 

Set
$$
U_2': = A_0^{r+ 2\varepsilon r}(x, 7\varepsilon r)$$
and
\begin{align*}
V_1&: = \{ F_0^{r, r + 2\varepsilon r}(q, s), q\in U_2, s \in [0, 1]\}\\
V_2&: = \{ \Psi^{r}_0(x, \varepsilon r,  7 \varepsilon r)(q, s), q \in F_0^{r, r + 2\varepsilon r}(U_2, 1),  s \in [0, 1]\}\\
V&: = V_1 \cup V_2. 
\end{align*}
It is obvious that $U_1$ is a retract of $V$ by the map defined as follows
\[G: V \times [0,1] \to V, \hspace{0.5cm} G(q, s) = 
\begin{cases} 
F_0^{r, r + 2\varepsilon r}(q, 2s), & q \in V_1, s\leq \frac{1}{2}\\
\Psi^{r}_0(x, \varepsilon r,  7 \varepsilon r)(q, 2s - 1), & q \in V_2, s > \frac{1}{2}.
\end{cases}
\]

Consider the following commutative diagram of homology groups induced by inclusion maps. 
\begin{center}
\begin{tikzcd}
& & H_*(V)  \arrow[leftarrow]{lld}[swap]{\alpha'_*} \arrow[leftarrow]{ld}[description]{\mu_*}
\arrow{ldd}[description]{\beta'_*}\\
H_*(U_1) \arrow{rd}[description]{\gamma_*}  \arrow{r}[description]{\alpha_*}  &  H_*(U_2) \arrow{d}[description]{\beta_*} & \\
& H_*(U_2').
\end{tikzcd}
\end{center}
Since $U_1$ and $U_2$ are retracts of $V$ and $U_2'$ respectively, the maps $\alpha'_*$ and $\beta_*$ are isomorphisms. Hence, homomorphisms in the diagram above are isomorphisms. This establishes claim (I). 

Now we are ready to prove the proposition.

 \textit{\underline{Case 1:}}  $\dim Y = 1$. First, we choose a neighborhood $U_\varepsilon$ of $0$ sufficiently small so that Lemma  \ref{lem_appro} holds, this means there are $0\leq e <1$, $c > 0$, for every $(0,t) \in U_\varepsilon$ there exist $r_t > 0$ and a homeomorphism $h_t: (A_t, 0) \to (A_0, 0 )$ such that
$$\|h_t(q) - q\| \leq c \|t\|^{1-e} r, \hspace{1cm} \forall q \in A_t \cap \bff B^{n-k}_{(0, r)},  r \leq r_t.$$
Shrinking $U_\varepsilon$ if necessary, we can assume that $c\|t\|^{1-e} < \varepsilon$, $\forall (0,t) \in U_\varepsilon$. This implies that
$$ U_1 \subset h_t(W_1) \subset U_2 \subset h_t(W_2).$$
Consider the following commutative diagram induced by inclusion maps

\begin{center}
\begin{tikzcd}
H_*(U_1) \arrow{r}{{\iota_1}_*}
\arrow{rd}{}[swap]{u_*}
&H_*( h_t(W_1)) \arrow{d}{\iota_*}
\arrow{rd}{w_*}[swap]{}\\
& H_*(U_2) \arrow{r}{{\iota_2}_*} & H_*(h_t(W_2))
\end{tikzcd} \end{center}

By the claim, we have that $u_*$ and $w_*$ are isomorphisms. Since the diagram commutes, it is easy to show that  ${\iota}_*, {\iota_1}_*, {\iota_2}_*$ are isomorphisms . Finally,
\begin{align*}
H_*\left(\pi_P^{-1}(x) \cap  A_t \cap \bff B^{n-k}_{(0,r)}\right) & = H_* ( U_1) = H_*(h_t(W_1))= H_*(W_2)\\
& = H_*(W_3) = H_* \left(\pi_P^{-1}(x) \cap  A_0 \cap \bff B^{n-k}_{(0,r)}\right).
\end{align*}

\textit{\underline{Case 2:}} $\dim Y > 1$. We define 
\begin{align*}
\Omega: = \bigg\{ (0,t) \in Y:  \forall  \varepsilon > 0,  \exists \sigma > 0, \forall r \in (0,  \sigma), \forall P \in \bb G_{n-k}^l, \\
\bigg[\forall x \in \bff B^{n-k}_{(0, r)}\cap P \setminus \Delta(P, \varepsilon, r, t)
 \\ \Rightarrow \big[ \chi \left(\pi_P^{-1}(x) \cap  A_t \cap \bff B^{n-k}_{(0,r)}\right) = \chi \left(\pi_P^{-1}(x) \cap  A_0 \cap \bff B^{n-k}_{(0,r)} \right) \big] \bigg]\bigg\}.
\end{align*}
Since $\chi(P, \varepsilon, x, t, r) : = \chi \left(\pi_P^{-1}(x) \cap  A_t \cap \bff   B^{n-k}_{(0,r)} \right)$ is a $\al D$-function, $\Omega$ is a $\al D$-set. It suffices to prove that the set $\Omega$ contains a neighborhood of $0$.  This fact actually follows directly from Curve Selection and Case 1.
\end{proof}


\begin{thm}[Main Theorem]\label{thm_killing_b}
The local Lipschitz Killing curvatures of $A$ are continuous along the strata of $\Sigma$.
\end{thm}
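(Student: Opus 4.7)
The plan is to derive the theorem by unfolding the definition (\ref{fm_local_Lip_cuv}) through the Grassmannian formula (\ref{fm_lips_curv_2}) and reducing, for each $l \in \{0,\dots,n\}$, the continuity of $t \mapsto \Lambda_l^{\loc}(A,(0,t))$ along the stratum $Y=\{0\}^{n-k}\times\bb R^k$ to the slicewise stability provided by Proposition \ref{prop_main_Killing}. It is enough to show continuity at $t=0$.

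The first step is to rewrite $\Lambda_l(A\cap \bff B^n_{((0,t),r)})$ using (\ref{fm_lips_curv_2}) and to split the integration over $\bb G_n^l$ according to the dimension of $\pi^{n-k}(P)$, where $\pi^{n-k}:\bb R^n \to \bb R^{n-k}$ is the projection onto the direction transverse to $Y$. For $dP$-almost every $P \in \bb G_n^l$ one has $\dim \pi^{n-k}(P)=l':=\min(l,n-k)$; for such $P$ the slice $\pi_P^{-1}(x)\cap A \cap \bff B^n_{((0,t),r)}$ is, after combining Lemma \ref{lem_retract} with the topological local product structure of $A$ along $Y$ furnished by Whitney regularity (and the quasi-isometric control of Lemma \ref{lem_appro}), homotopy equivalent to a slice $\pi_{P'}^{-1}(x')\cap A_t \cap \bff B^{n-k}_{(0,r')}$ of the fibre $A_t$, where $P':=\pi^{n-k}(P) \in \bb G_{n-k}^{l'}$ and $x'\in P'$, $r'\le r$ depend on $P, x, t$. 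A Fubini-type splitting of the invariant measure on $\bb G_n^l$ then expresses $\Lambda_l(A \cap \bff B^n_{((0,t),r)})$ as a combination of integrals of the form
\begin{equation*}
\int_{P'\in \bb G_{n-k}^{l'}}\int_{x\in P'} \chi\bigl(\pi_{P'}^{-1}(x)\cap A_t \cap \bff B^{n-k}_{(0,r)}\bigr)\,d\al H^{l'}(x)\,dP',
\end{equation*}
with explicit combinatorial coefficients, up to an error that is $o(r^l)$ uniformly in $t$ near $0$.

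Next I would apply Proposition \ref{prop_main_Killing} at the level $l'$: given $\varepsilon>0$, there is a neighbourhood $U_\varepsilon \ni 0$ in $Y$ such that for all $(0,t)\in U_\varepsilon$, every $P'\in \bb G_{n-k}^{l'}$ and every small enough $r$, the equality $\chi(\pi_{P'}^{-1}(x)\cap A_t \cap \bff B^{n-k}_{(0,r)})=\chi(\pi_{P'}^{-1}(x)\cap A_0 \cap \bff B^{n-k}_{(0,r)})$ holds outside a $\al D$-set $\Delta(P',\varepsilon,r,t)\subset P'$ of $l'$-volume at most $C\varepsilon r^{l'}$. Hardt's theorem bounds these Euler characteristics uniformly by some integer $N$, so the bad set contributes at most $2NC\varepsilon r^{l'}$ to the $x$-integral. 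Integrating over $\bb G_{n-k}^{l'}$, reassembling the pieces of the Fubini splitting, and dividing by $\mu_l r^l$ gives $|\Lambda_l(A\cap \bff B^n_{((0,t),r)})-\Lambda_l(A\cap \bff B^n_{(0,r)})|\le C'\varepsilon r^l$; passing to $r\to 0$ and then $\varepsilon\to 0$ yields the desired continuity of $\Lambda_l^{\loc}(A,\cdot)$ at $0\in Y$.

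The delicate point will be carrying out the Fubini-type reduction rigorously, since Whitney regularity gives only topological (not metric) local triviality of $A$ along $Y$, and an affine $l$-plane $P\subset \bb R^n$ does not intersect $A$ exactly as it would a product. The quasi-isometric estimate of Lemma \ref{lem_appro}, with its exponent $1-e<1$, is precisely what forces the discrepancies between the ambient slices of $A$ and the fibrewise slices (as well as the required correction to the radius $r\rightsquigarrow r'$) to be absorbable into the $o(r^l)$ error. This is also where polynomial boundedness of $\al D$ enters in an essential way: the counterexample of \cite{tv} shows that without polynomial boundedness the density itself already fails to be continuous along strata of a Whitney stratification.
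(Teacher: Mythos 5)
Your plan diverges from the paper's proof at the very first step, and the divergence leaves a genuine gap. The paper never attempts a Fubini-type decomposition of $\Lambda_l\bigl(A \cap \bff B^n_{((0,t),r)}\bigr)$ into Grassmannian integrals over $\bb G_{n-k}^{l'}$ of slices of the transverse fibre $A_t$. Instead it introduces the auxiliary family $\al A := \{(x,u,t): (x, u+t) \in A\} \subset \bb R^n \times \bb R^k$, whose fibre $\al A_t$ is exactly the translated germ $A - (0,t)$, and checks that $\al A$ carries a Whitney stratification $\Sigma''$ with $\{0\}^n \times \bb R^k$ as a stratum. Then $\Lambda_l^{\loc}(A, (0,t)) = \Lambda_l^{\loc}(\al A_t, 0)$ by definition, with the ambient Grassmannian $\bb G_n^l$ unchanged, so Proposition \ref{prop_main_Killing} applied to $(\al A, \Sigma'')$ bounds the measure of the symmetric differences of the sets $K_{l,j}^P(\al A_0 \cap \bff B^n_{(0,r)})$ and $K_{l,j}^P(\al A_t \cap \bff B^n_{(0,r)})$ by $C\varepsilon r^l$, and (\ref{fm_lips_curv_2}) gives $\vert \Lambda_l^{\loc}(\al A_0,0) - \Lambda_l^{\loc}(\al A_t, 0)\vert \leq C\varepsilon$ directly. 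No passage from $\bb G_n^l$ to $\bb G_{n-k}^{l'}$ is needed.

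The gap in your proposal is the claimed ``Fubini-type splitting of the invariant measure on $\bb G_n^l$'' that would express $\Lambda_l\bigl(A\cap\bff B^n_{((0,t),r)}\bigr)$, up to $o(r^l)$, as a fixed combination with explicit coefficients of integrals $\int_{\bb G_{n-k}^{l'}}\int_{P'}\chi(\pi_{P'}^{-1}(x)\cap A_t \cap \bff B^{n-k}_{(0,r)})\,d\al H^{l'}dP'$. This is not a routine reduction; it is essentially a local normal-slice decomposition of Lipschitz--Killing curvatures, of the kind developed at length in \cite{cm}, and it is precisely what the paper's $\al A$-construction is designed to avoid. Concretely, you would have to prove (i) that for $dP$-generic $P\in\bb G_n^l$, the ambient slice $\pi_P^{-1}(x)\cap A\cap\bff B^n_{((0,t),r)}$ is homotopy equivalent to a fibrewise slice of $A_t$ at a matched radius, uniformly in $t$ with error $o(r^l)$, and (ii) that after integrating over $(P,x)$ the contributions recombine into a single integral over $\bb G_{n-k}^{l'}$. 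Step (ii) is particularly problematic when $l < k$: a generic affine $(n-l)$-plane $\pi_P^{-1}(x)$ meets $Y$ along a $(k-l)$-dimensional piece, so the ambient slice near $(0,t)$ sees the variation of the fibres $A_s$ for $s$ in a positive-dimensional set near $t$, and is not controlled by a single transverse slice of $A_t$. Lemma \ref{lem_appro} and Lemma \ref{lem_retract} are indeed used, but inside Proposition \ref{prop_main_Killing}; they do not by themselves produce the Fubini decomposition your argument rests on.
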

\begin{proof}  We will work with the following family of $\al  D$-sets. 
$$\al A := \{(x,u,t) \in \bb R^{n-k} \times \bb R^k \times \bb R^k: (x, u + t) \in A\}.$$
It is obvious that $\al A$  is the image of $A\times \bb R^k$ under the linear isomophism $$\varphi: A\times \bb R^k \to \al A, \hspace{1cm} (q, u, t) \mapsto (q, u- t,t).$$ 
The germ of $A$ at $(0,t)$ can be viewed as the germ of $\al A_t$ at $(0,0)$. Therefore, the continuity of $\Lambda_l^{\loc}(A, t)$ along $Y$ is equivalent to the continuity of  $\Lambda_l^{\loc}(\al A_t, 0)$  along $\{0\}^n \times \bb R^k$. 

Set
$$\Delta: = \{(u, t) \in \bb R^k \times \bb R^k: u = t\}.$$

Since $\Sigma = \{ Y = \{0\}^{n-k} \times \bb R^k, X_1, \ldots, X_m\}$ is a Whitney stratification of $A$ , the collection $ \{\{0\}^{n-k} \times \bb R^k\times \bb R^k, X_1\times \bb R^k, \ldots, X_m\times \bb R^k\} $ is a Whitney stratification of $A \times \bb R^k$,  and so is $\Sigma': =  \{\{0\}^{n-k}  \times \Delta, \{0\}^{n-k} \times (\bb R^k\times \bb R^k \setminus \Delta) , X_1\times \bb R^k, \ldots, X_m\times \bb R^k\}$. We denote by $\Sigma''$ the collection of images of all strata of $\Sigma'$ under the map $\varphi$. Then, $\Sigma''$ is a Whitney stratification of $\al A$ containing $\varphi(\{0\}^{n-k} \times \Delta) = \{0\}^n \times \bb R^k$  as a stratum.  It suffices to show that  $\Lambda_l^{\loc}(\al A_t, 0)$ is continuous  in $t$ along this stratum. 

Applying Proposition \ref{prop_main_Killing} to the stratified set $(\al A, \Sigma'')$, with $0\leq l \leq n$ fixed, there is $C > 0$, for every  $\varepsilon > 0$, there is  a neighborhood $U_\varepsilon$ in $\{0\}^n \times \bb R^k$ of the origin such that for any $(0,t) \in U_\varepsilon$, there is a $r_t> 0$ such that for any $0< r \leq  r_t$, for every $P \in \bb G_{n}^l$, there exists a  $\al D$-subset $\Delta(P,\varepsilon, t,r)$ of $P$ with
$$ \psi(\Delta(P,\varepsilon, t,r), r) \leq C \varepsilon r^l$$
such that for any $x \in \left(\bff B^n_{(0,r)} \cap P\right) \setminus  \Delta(P,\varepsilon, t,r)$:
\begin{equation*}
\chi (\pi_P^{-1}(x) \cap \al A_t \cap \bff B^n_{(0,r)}) = \chi (\pi_P^{-1}(x) \cap \al A_0 \cap \bff B^n_{(0,r)}).
\end{equation*}
It follows from (\ref{fm_lips_curv_2}) that for any $j$ and for any $P \in \bb G_{n }^l$, we have
$$ \psi\left(  K_{l,j}^P(\al A_0 \cap \bff B^n_{(0,r)}) \setminus K_{l,j}^P(\al A_t \cap \bff B^n_{(0,r)}), r\right) \leq C \varepsilon r^l$$ and,
$$ \psi\left( K_{l,j}^P(\al A_t \cap \bff B^n_{(0,r)}) \setminus K_{l,j}^P (\al A_0 \cap \bff B^n_{(0,r)}), r\right) \leq C \varepsilon r^l.$$
Thus, we get
$$\vert \psi\left( K_{l,j}^P(\al A_0 \cap \bff B^n_{(0,r)}, r\right) - \psi\left( K_{l,j}^P(\al A_t \cap \bff B^n_{(0,r)}), r\right)\vert \leq C \varepsilon r^l.$$
By formula (\ref{fm_local_Lip_cuv}), 
$$\vert \Lambda_l\left(\al A_0 \cap \bff B^n_{(0,r)}\right) - \Lambda_l\left(\al A_t \cap \bff B^n_{(0,r)}\right)\vert \leq C \varepsilon r^l.$$
Dividing by $r^l$, we obtain
$$\vert \Lambda_l^{\loc}(\al A_0, 0) - \Lambda_l^{\loc}(\al A_t, 0)\vert \leq C\varepsilon.$$ 
The theorem is proved.

\end{proof}

\subsection{Kuo-Verdier condition (w)}
In this section,  we assume that $\al D$ is an arbitrary o-minimal structure and $\Sigma$ is a (w)-regular stratification of $A$.  The other hypothesises remain as in Section \ref{sec_whitney_b}.  We first establish results of the same types as Lemma \ref{lem_appro} and Proposition \ref{prop_main_Killing}. 

\begin{lem}\label{lem_appro_w}
There exist a neighborhood $U$ of $0$ in $Y$ and constants $C_1, C_2 > 0$ such that for every $(0, t')$ in $U$, there is a germ of homeomorphism 
$$ h: A|_U \to A_{t'} \times U, \hspace{1cm} h_{t}(q, t)= (h_t(q), 0, t)$$
satisfying
\begin{align*}
\|h_{t}(q) - q\| &\leq C_1\|t - t'\|r  \\
\|h_{t}^{-1} (q) - q\| & \leq  C_2\|t - t'\| r
\end{align*}
where  $q \in \bff B^{n-k}_{(0, r)}$ for which the mapping $h_{t}$ is well-defined, $r$ is sufficiently small. 
\end{lem}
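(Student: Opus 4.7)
The plan is to replay the proof of Lemma~\ref{lem_appro}, now exploiting the Kuo--Verdier condition (w) in place of Kuo's ratio test. Under (w) one has
\[
\delta(T_{\pi(x)}Y, T_x X_i) \leq C\|x - \pi(x)\|
\]
for $x \in X_i$ close enough to $0$, without the H\"older factor $\|\pi(x)\|^{-e}$ that appeared in (\ref{eqq_1}). This is precisely the input needed to upgrade the H\"older trivialization of Lemma~\ref{lem_appro} to a Lipschitz one.

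To implement this, I would first construct, for each standard basis vector $e_j$ of $\bb R^k = T_0 Y$ ($j = 1, \ldots, k$), a continuous stratified vector field $v_j$ on $A$ near $0$ with $\pi_* v_j \equiv e_j$. For $x \in X_i$, set $w_j(x) := P_x(0, e_j)$ with $P_x$ the orthogonal projection onto $T_x X_i$, and then normalize within $T_xX_i$ so that $\pi_* v_j(x) = e_j$; Whitney's condition (a) (implied by (w)) together with the (w) estimate makes the normalization well-defined on a neighborhood of $0$ and yields
\[
\|v_j(x) - (0, e_j)\| \leq C\|x - \pi(x)\|.
\]
To make each $v_j$ continuous across different strata, I would use the inductive partition-of-unity construction from the proof of Lemma~\ref{lem_retract} (with the du~Plessis extension theorem handling the passage from one skeleton to the next), while preserving the Lipschitz bound above at each stage.

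Given these vector fields, for $t, t' \in U$ form the composite $v := \sum_{j=1}^k (t'_j - t_j)\, v_j$; then $\pi_* v \equiv t' - t$, and since $v_j|_Y \equiv (0, e_j)$ one has $\|v(y) - v(\pi(y))\| \leq C\|t - t'\|\cdot\|y - \pi(y)\|$. Let $\Phi$ be its flow and set $f(s) := \|\Phi_{(q,t)}(s) - \pi(\Phi_{(q,t)}(s))\|$. The computation of Lemma~\ref{lem_appro} now gives $|f'/f| \leq C\|t - t'\|$, in place of the singular bound $\lesssim s^{-e}$, so by Gr\"onwall $f(s) \sim \|q\|$ uniformly on $s \in [0,1]$ provided $\|t - t'\|$ is small. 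Defining $h_t(q)$ as the $\bb R^{n-k}$-component of $\Phi_{(q,t)}(1)$ then produces the required homeomorphism, and
\[
\|h_t(q) - q\| \leq \int_0^1 \|v(\Phi_{(q,t)}(s)) - v(\pi(\Phi_{(q,t)}(s)))\|\,ds \lesssim \|t - t'\|\cdot\|q\| \leq C_1 \|t - t'\|\, r,
\]
where we used $\|q\| \leq r$. The bound on $h_t^{-1}$ is obtained by running the flow backward from $t'$ to $t$.

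The main obstacle will be the inductive extension of the $v_j$ across strata while retaining the estimate $\|v_j(x) - (0, e_j)\| \leq C\|x - \pi(x)\|$: a partition-of-unity blend of two such vector fields satisfies the bound again with a controlled constant only if the partition functions have gradients bounded on the relevant scale, so the successive neighborhoods $U_y$ and partitions $\{g_1, g_2\}$ of the induction must be chosen with this Lipschitz constraint in mind, which is a refinement of the purely continuous construction in the proof of Lemma~\ref{lem_retract}. Once this is set up carefully the remainder of the argument proceeds exactly as above.
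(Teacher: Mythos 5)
Your overall plan --- produce stratified vector fields lifting the coordinate directions of $Y$ with a $(w)$-type bound, integrate, and re-run the Gr\"onwall computation of Lemma~\ref{lem_appro} with $\|t-t'\|$ in place of $s^{-e}$ --- matches the paper's intent exactly. The difference is in how the vector fields are obtained: the paper simply invokes Verdier's rugose lifting lemma (\cite{verdier}, Proposition~4.6), which hands you, for each coordinate field $\partial_\alpha$ on $Y$, a continuous stratified lift $\tilde{\partial}_\alpha$ on $A$ satisfying $\|\tilde{\partial}_\alpha(x)-\tilde{\partial}_\alpha(\pi(x))\|\le C\|x-\pi(x)\|$; you instead try to reconstruct such lifts from scratch by blending the orthogonal projections $P_x(0,e_j)$ across skeleta as in Lemma~\ref{lem_retract}. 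The paper also composes the $k$ individual flows $\Phi_\alpha$, while you flow a single linear combination $\sum_j(t'_j-t_j)v_j$ for time $1$; either variant delivers the estimate.

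Two remarks on your reconstruction. First, the worry you state --- that the partition functions $g_1,g_2$ must have controlled gradients --- is not the right obstruction: the estimate $\|v_j(x)-(0,e_j)\|\le C\|x-\pi(x)\|$ is a pointwise bound on values, and since $g_1v+g_2\mu$ is a convex combination at each point, the bound passes to the blend with the maximum of the two constants, irrespective of $\nabla g_i$. Second, the genuine delicacy is elsewhere: the du~Plessis extension $\mu$ of the vector field from $S^{d-1}$ to $\mathring{S}^d$ is only guaranteed to be continuous, not to obey the quantitative inequality $\|\mu(x)-(0,e_j)\|\le C\|x-\pi(x)\|$ off the skeleton; one must shrink the collar $T$ and argue by uniform continuity on compacta (or adopt a controlled-extension statement as in \cite{plessis}) to ensure the blended field still carries a uniform constant near $S^{d-1}$. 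You flag a difficulty but misidentify it, and you do not close it. Citing Verdier, as the paper does, sidesteps this entirely and is the cleaner route; if you want a self-contained construction you would essentially be reproving a version of \cite{verdier}, Proposition~4.6.
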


\begin{proof}The argument here is classical which can be found in \cite{mather} \cite{verdier}. Consider the coordinate vector fields $\partial_1, \ldots, \partial_k$ in $\{0\}^{n-k} \times \bb R^k$. There are corresponding rugose vector fields $\tilde{\partial_1}, \ldots, \tilde{\partial_k}$ on $A$ (consider in a neighbohood of $0$) such that 
$$\pi_*(\tilde{\partial_\alpha}) = \partial_\alpha, \hspace{1cm} \alpha \in \{1, \ldots, k\},$$
where $\pi: \bb R^{n-k} \times \bb R^k \to \{0\}^{n-k} \times\bb R^k$, $\pi(q, t) = (0,t)$ (see \cite{verdier}, Proposition 4.6).  

By the rugosity of $\tilde{\partial_\alpha}$, there is a neighborhood $V$ of $0$ such that for all $x \in V \cap A$,  
\begin{equation}\label{fm_w}
\| \tilde{\partial_\alpha} (x)-  \tilde{\partial_\alpha}(\pi(x) )\| \leq C \|x - \pi(x)\|.
\end{equation}
Denote by $\Phi_\alpha$ the flow generated by the vector field $\tilde{\partial_\alpha} $.  We write 
$$\Phi_\alpha(x,s) = (\Phi_\alpha^1(x, s) , \Phi_\alpha^2(x, s)) \in \bb R^{n-k} \times \bb R^k.$$
Since $\tilde{\partial_\alpha} $ satisfies the formula (\ref{fm_w}), by the same computation as in the proof of Lemma \ref{lem_appro} we have
$$\|\Phi_\alpha^1(x, s) - q\| \leq C |s|\|q\|, \hspace{1cm}  x = (q, t).$$
Define
$$h(x)  = (\Phi_{1}(\ldots (\Phi_{k-1}( \Phi_{k} (x, t_k' - t_k), t_{k-1}' - t_{k-1}),\ldots),  t_1' - t_1), t), $$
where $t = (t_1, \ldots, t_k)$ and $t' = (t_1', \ldots, t_k')$. 
It is easy to check that $h$ is the desired homeomorphism. 
\end{proof}

\begin{pro}\label{prop_main_killing_w} 
Fix $0 \leq l \leq n-k$. There exist a constant $C > 0$ and a neighborhood $U$ of $0$ in $Y$ such that for every $(0,t)$ and $(0, t')$  in $U$, $\exists r_{t, t'} >0$, for every $P \in \bb G_{n-k}^l$ and $0 < r \leq  r_{t, t'}$, there is  a $\al D$-subset $ \Delta(P,r, t, t')$ of $P$ with 
$$\al \psi\left( \left( \Delta(P,r, t, t')\right), r\right) \leq C \|t - t'\|  r^l$$
such that for any $x \in \left(\bff B^{n-k}_{(0,r)}\cap P\right) \setminus  \Delta(P, r, t, t')$,
\begin{equation*} 
\chi \left(\pi_P^{-1}(x) \cap  A_t \cap \bff B^{n-k}_{(0,r)}\right) = \chi \left(\pi_P^{-1}(x) \cap  A_{t'}\cap \bff B^{n-k}_{(0,r)} \right).
\end{equation*}
\end{pro}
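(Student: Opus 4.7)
The plan is to follow closely the proof of Proposition \ref{prop_main_Killing}, with Lemma \ref{lem_appro_w} playing the role of Lemma \ref{lem_appro}. The key improvement, and the reason the $\varepsilon$ of the Whitney case is replaced by the Lipschitz quantity $\|t-t'\|$, is that the homeomorphism supplied by Lemma \ref{lem_appro_w} displaces points by at most $C\|t-t'\|r$ (linearly in $\|t-t'\|$), rather than the H\"older bound $C\|t\|^{1-e}r$ available only along curves in the Whitney setting. Consequently the roles of $\varepsilon$ in the original argument are taken over uniformly by $\eta := \|t-t'\|$.

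Since (w) implies (b), Lemma \ref{lem_retract} still furnishes, for every $(0,t)\in U$ and every $0<r<r'$ below some threshold $r_t$, a stratum-preserving deformation retract $F_t^{r,r'}$ from $A_t\cap \bff B^{n-k}_{(0,r')}$ onto $A_t\cap \bff B^{n-k}_{(0,r)}$ satisfying $\|F_t^{r,r'}(q,s)-q\|\leq 2s|r'-r|$, and the same for $t'$. I would then apply Hardt's triviality theorem to the restriction of $\pi_P$ to
$$A_t\cap \bff B^{n-k}_{(0,r+2\eta r)} \;\cup\; A_{t'}\cap \bff B^{n-k}_{(0,r+3\eta r)}$$
to obtain a finite $\al D$-partition of $P\cap \bff B^{n-k}_{(0,r+3\eta r)}$ over which the trivialization is compatible with all strata of $\al S_t^r$, $\al S_t^{r+2\eta r}$, $\al S_{t'}^{r}$, $\al S_{t'}^{r+\eta r}$ and $\al S_{t'}^{r+3\eta r}$. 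Taking $\Delta(P,r,t,t')$ to be the union of the $10\eta r$-neighborhoods of the topological boundaries of the top-dimensional cells of this partition, Proposition \ref{pro_vol}(ii) yields the claimed estimate $\psi(\Delta(P,r,t,t'),r)\leq C\|t-t'\|r^l$ with a constant $C$ independent of all parameters.

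For $x\in (\bff B^{n-k}_{(0,r)}\cap P)\setminus \Delta(P,r,t,t')$ the ball $\bff B^{n-k}_{(x,10\eta r)}\cap P$ lies inside a single cell of the Hardt partition, so the verbatim claim from the proof of Proposition \ref{prop_main_Killing} (with $\varepsilon$ replaced by $\eta$) shows that the inclusions
$$A_t^r(x,\eta r)\hookrightarrow A_t^{r+2\eta r}(x,3\eta r),\qquad A_{t'}^{r+\eta r}(x,2\eta r)\hookrightarrow A_{t'}^{r+3\eta r}(x,4\eta r),\qquad A_{t'}^{r}(x,\eta r)\hookrightarrow A_{t'}^{r+3\eta r}(x,4\eta r)$$
all induce isomorphisms on homology, via the retract built from $F_{\cdot}^{r,r+2\eta r}$ and the Hardt trivialization $\Psi_{\cdot}^{\cdot}(x,\eta r,7\eta r)$. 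Finally, applying Lemma \ref{lem_appro_w} produces a homeomorphism $h:A_t\to A_{t'}$ (defined near $0$) with $\|h(q)-q\|\leq C_1\eta r$; choosing the factor $10$ in the definition of $\Delta(P,r,t,t')$ large enough relative to $C_1$ guarantees the inclusion chain $U_1\subset h(W_1)\subset U_2\subset h(W_2)$ with the same meaning as in Case 1 of Proposition \ref{prop_main_Killing}. The triangular diagram chase then yields $\chi(\pi_P^{-1}(x)\cap A_t\cap \bff B^{n-k}_{(0,r)})=\chi(\pi_P^{-1}(x)\cap A_{t'}\cap \bff B^{n-k}_{(0,r)})$, as required.

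I do not expect a serious obstacle here; the (w)-case is in fact more comfortable than the Whitney one, because Lemma \ref{lem_appro_w} is valid for any dimension of $Y$, so the reduction via curve selection that constituted Case 2 of Proposition \ref{prop_main_Killing} disappears, and polynomial boundedness of $\al D$ is no longer needed. The only bookkeeping is to align the $10\eta r$ radius of the exceptional neighborhood with the displacement constant $C_1$ of $h$, a one-line adjustment.
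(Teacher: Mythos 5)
Your proposal is correct and follows essentially the same route as the paper: the paper's own proof simply invokes Lemma \ref{lem_appro_w} to get the homeomorphism with linear (in $\|t-t'\|$) displacement and then says ``apply the arguments of Proposition \ref{prop_main_Killing} (Case 1) with $\varepsilon$ replaced by $\|t-t'\|$, treating $t'$ as the origin.'' You have fleshed out that replay in more detail and correctly flagged the two side benefits — Case 2 (curve selection) is unnecessary because Lemma \ref{lem_appro_w} works for any $\dim Y$, and polynomial boundedness is no longer used — with only a cosmetic swap of the roles of $t$ and $t'$ relative to the paper's convention.
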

\begin{proof}
Choose a neighborhood $U$ of $0$ in $Y$ sufficiently small so that Lemma  \ref{lem_appro_w} holds, i.e.,   there exists $c > 0$, for every $(0,t)$ and $(0, t')$ in $U$, there is a homeomorphism $h_{t, t'}: A_t \to A_{t'} $ and $r_{t, t'} > 0$ such that
$$\|h_{t, t'}(q) - q\| \leq c \|t - t'\| r, \hspace{1cm} \forall q \in A_t \cap \bff B^{n-k}_{(0, r)}, \forall r \leq   r_{t, t'}.$$

Applying the same arguments as in the proof of Proposition \ref{prop_main_Killing} (Case 1) (just replace $\varepsilon$ with $\|t - t'\|$ and consider $t'$ as the origin) we obtain the desired result.
\end{proof}
By using Proposition \ref{prop_main_killing_w} and  the same arguments as in the proof of Theorem \ref{thm_killing_b} we get:
\begin{thm} The local Lipschitz Killing curvatures of $A$ are locally Lipschitz along the strata of $\Sigma$.
\end{thm}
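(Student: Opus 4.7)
The plan is to repeat verbatim the argument of Theorem~\ref{thm_killing_b}, with two substitutions: (i) the qualitative bound ``$\leq C\varepsilon$'' that came from Proposition~\ref{prop_main_Killing} is replaced by the quantitative bound ``$\leq C\|t-t'\|$'' furnished by Proposition~\ref{prop_main_killing_w}; (ii) the role of the base point $0\in Y$ is played by a moving pair $(0,t),(0,t')\in Y$, so that the difference of local Lipschitz Killing curvatures is controlled linearly in $\|t-t'\|$ uniformly on a neighborhood of $0$.

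First I would perform the same reduction as in the proof of Theorem~\ref{thm_killing_b}. Namely, form the $\al D$-family $\al A := \{(x,u,t)\in \bb R^{n-k}\times\bb R^k\times\bb R^k : (x,u+t)\in A\}$, observe that it is the image of $A\times\bb R^k$ under the linear isomorphism $\varphi(q,u,t)=(q,u-t,t)$, and equip it with the image under $\varphi$ of the natural stratification of $A\times\bb R^k$ refined along the diagonal $\Delta$. Because $(w)$-regularity is preserved by products with $\bb R^k$ and by linear isomorphisms, the resulting stratification $\Sigma''$ of $\al A$ is $(w)$-regular and contains $\{0\}^n\times\bb R^k$ as a stratum. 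Under this identification, the germ of $A$ at $(0,t)$ corresponds to the germ of $\al A_t$ at the origin, so it suffices to prove, for each fixed $0\leq l\leq n$, that the map $t\mapsto \Lambda_l^{\loc}(\al A_t,0)$ is locally Lipschitz at $0$.

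Next I would apply Proposition~\ref{prop_main_killing_w} to $(\al A,\Sigma'')$ at the stratum $\{0\}^n\times\bb R^k$. This yields a neighborhood $U$ of $0$ and a constant $C>0$ such that for all $(0,t),(0,t')\in U$ there is $r_{t,t'}>0$ for which, whenever $0<r\leq r_{t,t'}$ and $P\in\bb G_n^l$, an exceptional $\al D$-set $\Delta(P,r,t,t')\subset P$ with $\psi(\Delta(P,r,t,t'),r)\leq C\|t-t'\|r^l$ is produced, outside of which $\chi\bigl(\pi_P^{-1}(x)\cap\al A_t\cap\bff B^n_{(0,r)}\bigr)=\chi\bigl(\pi_P^{-1}(x)\cap\al A_{t'}\cap\bff B^n_{(0,r)}\bigr)$. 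Since these Euler characteristics take only finitely many values $j\in\{-N,\ldots,N\}$, the symmetric difference of the level sets $K_{l,j}^P(\al A_t\cap\bff B^n_{(0,r)})$ and $K_{l,j}^P(\al A_{t'}\cap\bff B^n_{(0,r)})$ is contained in $\Delta(P,r,t,t')$ for every $j$, hence has $\al H^l$-measure at most $C\|t-t'\|r^l$.

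Finally, inserting these bounds into formula~(\ref{fm_lips_curv_2}), integrating over the Grassmannian (a probability space) and summing the finitely many contributions in $j$ gives
\[
\bigl|\Lambda_l(\al A_t\cap\bff B^n_{(0,r)})-\Lambda_l(\al A_{t'}\cap\bff B^n_{(0,r)})\bigr|\leq C'\|t-t'\|\,r^l
\]
for some constant $C'$ independent of $r,t,t'$. Dividing by $\mu_l r^l$ and letting $r\to 0$ (the limit exists by the analogue of Theorem~1.3 of~\cite{cm} invoked in Section~4.1) yields
\[
\bigl|\Lambda_l^{\loc}(\al A_t,0)-\Lambda_l^{\loc}(\al A_{t'},0)\bigr|\leq C''\|t-t'\|,
\]
which is the desired Lipschitz estimate along $Y$. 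The only delicate point is the \emph{uniformity} of the Lipschitz constant over the pair $(t,t')$ in a fixed neighborhood of $0$; this must be tracked through the uniform constants $C_1,C_2$ of Lemma~\ref{lem_appro_w} and the uniform $C$ of Proposition~\ref{prop_main_killing_w}, but no new obstacle arises --- once Proposition~\ref{prop_main_killing_w} is in hand, the theorem falls out mechanically.
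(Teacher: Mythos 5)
Your proposal is correct and follows exactly the route the paper intends: the paper's entire proof of this theorem is the one-line instruction to repeat the argument of Theorem~\ref{thm_killing_b} (the $\al A$/$\varphi$ reduction, then the exceptional-set and integration over the Grassmannian argument) with Proposition~\ref{prop_main_killing_w} in place of Proposition~\ref{prop_main_Killing}, which is precisely what you carry out. The only minor addition you make explicit --- that $(w)$-regularity is preserved under product with $\bb R^k$ and under linear isomorphism, so that $\Sigma''$ is indeed $(w)$-regular --- is a standard fact the paper leaves implicit, and your tracking of the uniformity of the constant over the pair $(t,t')$ is exactly the point the paper is silently relying on.
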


\section{Acknowledgement} We would like to thank Professor David Trotman for his interest and suggestion for the problem.

\bibliographystyle{babplain}
\providecommand{\bysame}{\leavevmode\hbox to3em{\hrulefill}\thinspace}
\providecommand{\MR}{\relax\ifhmode\unskip\space\fi MR }
\providecommand{\MRhref}[2]{%
  \href{http://www.ams.org/mathscinet-getitem?mr=#1}{#2}
}
\providecommand{\href}[2]{#2}

\end{document}